\renewcommand{\PackageWarningNoLine}[2]{}
\begin{document}
\title*{Discontinuous Galerkin Isogeometric Analysis of Elliptic PDEs on Surfaces}
\author{Ulrich Langer\inst{1}\and
Stephen E. Moore\inst{2}}
\institute{Institute for Computational Mathematics, Johannes Kepler University,       	
Altenbergerstr. 69, A-4040 Linz, Austria  
\texttt{ ulanger@numa.uni-linz.ac.at}
\and Johann Radon Institute for Computational and Applied Mathematics, Austrian Academy of Sciences,
Altenbergerstr. 69, A-4040 Linz, Austria
\texttt{stephen.moore@ricam.oeaw.ac.at}}
\maketitle

\section{Introduction}
\label{sec1:Introduction}

The Isogeometric Analysis (IGA) 
was introduced by \cite{HughesCottrellBazilevs:2005a}
and has since been developed intensively, see also monograph 
\cite{CottrellHughesBazilevs:2009a}, is a very suitable framework 
for representing and discretizing
Partial Differential Equations (PDEs) on surfaces.
We refer the reader to the survey paper by 
\cite{DziukElliot:2013a} 
where different finite element approaches to the numerical solution 
of PDEs on surfaces are discussed. 
Very recently, 
\cite{DednerMadhavanStinner:2013a}
have used and analyzed the Discontinuous Galerkin (DG) finite element method 
for solving elliptic problems on surfaces. 
The IGA of second-order PDEs on surfaces 
that avoid errors arising from the approximation of the surface,  
has been introduced and numerically studied by 
\cite{DedeQuarteroni:2012a}.
\cite{Brunero:2012a} presented some
discretization error analysis 
of the DG-IGA applied to plane (2d) diffusion problems that carries over to 
plane linear elasticity problems which have recently been studied 
numerically in 
\cite{ApostolatosSchmidtWuencherBletzinger:2013a}.
The efficient generation of the IGA equations, their fast solution,
and the implementation of adaptive IGA schemes are currently 
hot research topics. The use of DG technologies will certainly 
facilitate the handling of the multi-patch case. 
\par In this paper, we use the DG method to handle 
the IGA of diffusion problems on closed or open, multi-patch NURBS surfaces. 
The DG technology easily allows us to handle non-homogeneous Dirichlet 
boundary condition as in the Nitsche method and the multi-patch 
NURBS spaces which can be discontinuous across the patch boundaries.
We also derive discretization error estimates 
in the DG- and $L_{2}$-norms. Finally, we present some numerical results 
confirming our theoretical estimates.
%
\section{Surface Diffusion Model Problem}
\label{sec2:SurfaceDiffusionModelProblem}

Let us assume that the physical (computational) domain $\Omega$,
where we are going to solve our diffusion problem,
is a sufficiently smooth, two-dimensional generic (Riemannian) manifold (surface)
defined in the physical space $\mathbb{R}^{3}$ 
by means of a smooth multi-patch NURBS mapping
that is defined as follows. 
Let $\mathcal{T}_{H}= \{\Omega^{(i)}\}_{i=1}^{N}$ be a partition of 
our physical computational domain $\Omega$ into 
non-overlapping patches (subdomains) $\Omega^{(i)}$ such that 
$\overline{\Omega}= \bigcup_{i=1}^{N} \overline{\Omega}^{(i)} $ and 
$\Omega^{(i)} \cap \Omega^{(j)}= \emptyset $ for $i \neq j$,
and let each patch $\Omega^{(i)}$ be the image of the 
parameter domain  $\widehat{\Omega} = (0,1)^2 \subset \mathbb{R}^{2}$ 
by some NURBS mapping 
$G^{(i)} : \widehat{\Omega} \rightarrow \Omega^{(i)} \subset \mathbb{R}^{3}, 
\mathbf{\xi} = (\mathbf{\xi}_1,\mathbf{\xi}_2) 
\mapsto \mathbf{x} = (\mathbf{x}_1,\mathbf{x}_2,\mathbf{x}_3)=G^{(i)}(\mathbf{\xi})$, 
which can be represented in the form
\begin{equation}
\label{sec2:GeometricalMappingRepresentation}
  G^{(i)}(\xi_{1},\xi_{2}) = \sum_{k_{1}=1}^{n_{1}} \sum_{k_{2}=1}^{n_{2}}
 \mathbf{P}^{(i)}_{(k_{1},k_{2})} \widehat{R}^{(i)}_{(k_{1},k_{2})}(\xi_{1},\xi_{2})
\end{equation}
where $\{ \widehat{R}^{(i)}_{(k_{1},k_{2})} \}$ are the bivariate NURBS basis functions, 
and  $\{\mathbf{P}^{(i)}_{(k_{1},k_{2})} \}$ are the control points, 
see \cite{CottrellHughesBazilevs:2009a} for a detailed 
description.

\par Let us now consider a 
diffusion problem 
on the surface $\Omega$ the weak formulation of which 
can be written as follows: find $u \in V_g$ such that
\begin{equation}
\label{sec2:VariationalFormulation}
	a(u,v) = \langle  F,v \rangle \quad \forall v \in V_0,
\end{equation}
with the bilinear and linear forms are given by the relations
\begin{equation*}
	a(u,v) = \int_\Omega \alpha \, \nabla_\Omega u \cdot \nabla_\Omega v \, d \Omega
	 \quad \mbox{and} \quad
	\langle  F,v \rangle  = \int_\Omega f v \, d \Omega + \int_{\Gamma_N} g_N v \,d \Gamma,
\end{equation*}
respectively, where $\nabla_\Omega$ denotes the so-called tangential or surface
gradient, see e.g. Definition~2.3 in \cite{DziukElliot:2013a}
for its precise description.
The hyperplane $V_g$ and the test space $V_0$ are given by 
	$V_g=\{v \in V = H^1(\Omega): v=g_D \;\mbox{on}\; \Gamma_D\}$
and
	$V_0=\{v \in V: v=0 \;\mbox{on}\; \Gamma_D\}$
for the case of an open surface $\Omega$ with the boundary 
$\Gamma = \overline{\Gamma}_D \cup \overline{\Gamma}_N$ 
such that $\mbox{meas}_1(\Gamma_D) > 0$, 
whereas
	$V_g=V_0=\{v \in V: \int_\Omega v \, d \Omega =0\}$
in the case of a pure Neumann problem ($\Gamma_N = \Gamma$)
as well as in the case of closed surfaces unless there is 
a reaction term. In case of closed
surfaces there is of course  no integral over $\Gamma_N$ 
in the linear functional on the right-hand side of
(\ref{sec2:VariationalFormulation}).
In the remainder of the paper, we will mainly discuss the case of mixed boundary 
value problems on an open surface under appropriate  assumptions 
(e.g., $\mbox{meas}_1(\Gamma_D) > 0$, $\alpha$ - uniformly positive and bounded,
$f\in L_2(\Omega)$, $g_D \in H^{\frac{1}{2}}(\Gamma_{D})$ and $g_{N} \in L_{2}(\Gamma_{N})$~) 
ensuring existence and uniqueness 
of the solution of (\ref{sec2:VariationalFormulation}).
For simplicity, we assume that the diffusion coefficient $\alpha$ is patch-wise constant,
i.e. $\alpha = \alpha_i$ on $\Omega^{(i)}$ for $i=1,2,\ldots,N$.
The other cases including the reaction-diffusion case can be treated in the same way 
and yield the same results like presented below.

\section{DG-IGA Schemes and their Properties}
\label{sec3:DGIGASchemesAndProperties}
The DG-IGA variational identity
\begin{equation}
 \label{sec3:DG-VariationalIdentity}
a_{DG}(u,v) = \langle  F_{DG},v \rangle \quad \forall v \in \mathcal{V} = H^{1+s}(\mathcal{T}_{H}),
\end{equation}
which corresponds to (\ref{sec2:VariationalFormulation}),
can be derived in the same way as their FE counterpart, where 
$H^{1+s}(\mathcal{T}_{H}) =\{v \in L_{2}(\Omega): v|_{\Omega^{(i)}} 
\in H^{1+s}(\Omega^{(i)}), \; \forall \, i = 1,\ldots,N\}$
with some $s > 1/2$. 
The DG bilinear and linear forms in the 
\textit{Symmetric Interior Penalty Galerkin} (SIPG) version, 
that is considered throughout this paper for definiteness,
are defined by the relationships
\begin{eqnarray}
\label{sec3:DG-BilinearForm}
\nonumber
 a_{DG}(u,v) &=&\sum_{i=1}^{N} \int_{\Omega^{(i)}} 
		  \alpha_{i} \nabla_{\Omega} u \cdot \nabla_{\Omega} v \, d\Omega\\ \nonumber
	     && -\sum_{\gamma \in \mathcal{E}_{I} \cup \mathcal{E}_{D}} 
		  \int_{\gamma} 
		  \left( 
		  \{ \alpha \nabla_{\Omega} u \cdot \mathbf{n}\} 
		  [v] +
		  \{\alpha \nabla_{\Omega} v \cdot \mathbf{n}\}  [u]
		  \right)\,d\Gamma\\
	     && + \sum_{ \gamma \in \mathcal{E}_{I} \cup \mathcal{E}_{D}} 
		  \frac{\delta}{ h_{\gamma} }
		  \int_{\gamma}  \alpha_{\gamma} [u][v]\,d\Gamma
\end{eqnarray}
and
\begin{eqnarray}
\label{sec3:DG-LinearForm}
\nonumber
\langle  F_{DG},v \rangle  
  &=& \int_{\Omega} f v d\,\Omega 
  + \sum_{\gamma \in \mathcal{E}_{N}} \int_{\gamma} g_{N}v\, d\Gamma\\
  && + \sum_{\gamma \in \mathcal{E}_{D}} \int_{\gamma} 
		\alpha_{\gamma}	  \left( -  \nabla_{\Omega} v \cdot \mathbf{n}  
			  +  \frac{\delta}{ h_{\gamma} } v \right) g_{D}\,d\Gamma,
\end{eqnarray}
respectively, where the usual DG notations 
for the averages $\{ \cdot \}$ and jumps $[\cdot ]$
are used,
see, e.g., \cite{Riviere:2008a}.
The sets 
$\mathcal{E}_{I}$, $\mathcal{E}_{D}$ and $\mathcal{E}_{N}$ 
denote the sets of edges $\gamma$ of the patches 
belonging to $\Gamma_I = \cup \,\partial \Omega^{(i)} \setminus \{\Gamma_D \cup \Gamma_N\}$,
$\Gamma_D$ and $\Gamma_N$, respectively whereas 
$h_{\gamma}$ is the mesh-size on $\gamma$.
The penalty parameter $\delta$ must be chosen such that 
the ellipticity of the DG bilinear on $\mathcal{V}_{h}$
can be ensured.
The relationship between our model problem 
(\ref{sec2:VariationalFormulation}) 
and the DG variational identity
(\ref{sec3:DG-VariationalIdentity})
is given by the consistency theorem that can easily be verified.
\begin{theorem}\label{Thm:Sec3:Consistency}
If the solution $u$ of the variational problem (\ref{sec2:VariationalFormulation}) belongs 
to $V_g \cap H^{1+s}(\mathcal{T}_{H  })$ with some $s > 1/2$, then
$u$ satisfies the DG variational identity (\ref{sec3:DG-VariationalIdentity}). 
Conversely, if $u \in  H^{1+s}(\mathcal{T}_{H})$
satisfies (\ref{sec3:DG-VariationalIdentity}), 
then $u$ is the solution of our original variational problem 
(\ref{sec2:VariationalFormulation}).
\end{theorem}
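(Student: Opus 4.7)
Both implications rely on a patchwise surface Green formula applied to $\sum_{i}\int_{\Omega^{(i)}}\alpha_{i}\nabla_{\Omega}u\cdot\nabla_{\Omega}v\,d\Omega$ and on the trace regularity that the assumption $u\in H^{1+s}(\mathcal{T}_{H})$ with $s>1/2$ provides: the tangential gradient $\nabla_{\Omega}u|_{\Omega^{(i)}}$ then has an $L_{2}$ trace on each edge $\gamma$, so that the conormal fluxes $\alpha\nabla_{\Omega}u\cdot\mathbf{n}$, their averages, and the jumps $[u]$, $[v]$ appearing in (\ref{sec3:DG-BilinearForm})--(\ref{sec3:DG-LinearForm}) are all meaningful objects in $L_{2}(\gamma)$.

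\textbf{Forward direction.} I would start from a solution $u\in V_{g}\cap H^{1+s}(\mathcal{T}_{H})$ of (\ref{sec2:VariationalFormulation}) and fix an arbitrary $v\in\mathcal{V}$. Testing (\ref{sec2:VariationalFormulation}) first against compactly supported functions on a single patch yields the pointwise PDE $-\mathrm{div}_{\Omega}(\alpha_{i}\nabla_{\Omega}u)=f$ on $\Omega^{(i)}$, and then against general $v\in V_{0}$ yields the natural Neumann condition $\alpha\nabla_{\Omega}u\cdot\mathbf{n}=g_{N}$ on $\Gamma_{N}$ in the $L_{2}$-trace sense. Applying the surface Green identity patchwise and collecting the boundary contributions on $\mathcal{E}_{I}\cup\mathcal{E}_{D}\cup\mathcal{E}_{N}$, I would rewrite the skeleton terms in the standard DG average/jump form. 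Since $u\in V_{g}\subset H^{1}(\Omega)$, one has $[u]=0$ on every $\gamma\in\mathcal{E}_{I}$ and $u|_{\gamma}=g_{D}$ on every $\gamma\in\mathcal{E}_{D}$; consequently the symmetrization term $\{\alpha\nabla_{\Omega}v\cdot\mathbf{n}\}[u]$ and the penalty term $(\delta/h_{\gamma})\alpha_{\gamma}[u][v]$ either vanish on $\mathcal{E}_{I}$ or recombine with $g_{D}$ to yield exactly the Dirichlet contributions in $\langle F_{DG},v\rangle$, while the Neumann trace identifies the term on $\mathcal{E}_{N}$. Collecting everything gives (\ref{sec3:DG-VariationalIdentity}).

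\textbf{Reverse direction.} Given $u\in H^{1+s}(\mathcal{T}_{H})$ satisfying (\ref{sec3:DG-VariationalIdentity}), I would recover the four ingredients of (\ref{sec2:VariationalFormulation}) one by one through a careful choice of localized test functions. Testing with $v\in C_{0}^{\infty}(\Omega^{(i)})$ extended by zero across $\partial\Omega^{(i)}$ kills all skeleton terms and yields the patchwise strong equation $-\mathrm{div}_{\Omega}(\alpha_{i}\nabla_{\Omega}u)=f$. Substituting this back and undoing the Green identity leaves only boundary terms; localizing test functions near a single edge $\gamma$ and independently prescribing $[v]$ and $\{\alpha\nabla_{\Omega}v\cdot\mathbf{n}\}$ on that edge forces $[u]=0$ on $\mathcal{E}_{I}$ (so $u\in H^{1}(\Omega)$), $u=g_{D}$ on $\mathcal{E}_{D}$ (so $u\in V_{g}$), and $\alpha\nabla_{\Omega}u\cdot\mathbf{n}=g_{N}$ on $\mathcal{E}_{N}$. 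A final integration by parts then gives (\ref{sec2:VariationalFormulation}) for every $v\in V_{0}\subset\mathcal{V}$.

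\textbf{Main obstacle.} The forward direction is routine once trace regularity is available; the only real subtlety lies in the reverse direction, namely producing localized test functions on the multi-patch NURBS skeleton that independently realize prescribed values of $[v]$ and $\{\alpha\nabla_{\Omega}v\cdot\mathbf{n}\}$ on a single target edge while vanishing on all others. On a smooth surface with smooth patch interfaces this is achievable by a partition-of-unity/bump construction in $\widehat{\Omega}$ pushed forward through the mappings $G^{(i)}$, but compatibility at patch corners and the interaction between penalty and consistency terms require careful bookkeeping. Once such test functions are in hand, the remaining step is a linear algebraic argument on the edge integrals.
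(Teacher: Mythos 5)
The paper itself offers no proof of Theorem~\ref{Thm:Sec3:Consistency} (it only remarks that the statement ``can easily be verified''), and your argument is exactly the standard SIPG consistency proof the authors have in mind: patchwise surface Green's formula, trace regularity from $s>1/2$, and recovery of the strong form plus interface/boundary conditions by localized test functions. The overall approach is sound, and the forward direction is fine as a sketch, though you should state explicitly that rewriting $\sum_i\int_{\partial\Omega^{(i)}}\alpha_i\nabla_\Omega u\cdot\mathbf{n}_i\, v_i\,d\Gamma$ in average/jump form produces an extra term $\sum_{\gamma\in\mathcal{E}_I}\int_\gamma[\alpha\nabla_\Omega u\cdot\mathbf{n}]\{v\}\,d\Gamma$, which you discard using the flux continuity $[\alpha\nabla_\Omega u\cdot\mathbf{n}]=0$ that follows from the weak formulation together with the patchwise strong PDE; at present this is only implicit in your write-up.

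One step is actually wrong as written: the concluding claim of the reverse direction, ``$V_0\subset\mathcal{V}$.'' A generic $v\in V_0=\{v\in H^1(\Omega):v=0\ \text{on}\ \Gamma_D\}$ is not patchwise in $H^{1+s}$ for $s>1/2$, so $V_0\not\subset H^{1+s}(\mathcal{T}_H)$ and you cannot simply restrict the DG identity to all of $V_0$. The fix is routine but should be spelled out: having shown $[u]=0$ on $\mathcal{E}_I$, $u=g_D$ on $\Gamma_D$, the patchwise PDE and the Neumann condition, you obtain $a(u,v)=\langle F,v\rangle$ for all $v$ in the dense subspace $V_0\cap H^{1+s}(\mathcal{T}_H)$ (e.g.\ piecewise smooth conforming functions vanishing on $\Gamma_D$), and then extend to all of $V_0$ by density together with the $H^1$-continuity of $a(\cdot,\cdot)$ and $\langle F,\cdot\rangle$. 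With that repair, and the usual care about realizing prescribed edge data by bump functions pushed forward through the $G^{(i)}$ (which you already flag), the proof is complete.
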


\par Now we consider the finite-dimensional Multi-Patch NURBS subspace
\begin{equation*}
\mathcal{V}_{h}= \{v \in L_{2}(\Omega): \; v|_{\Omega^{(i)}}\in V^{i}_{h}(\Omega^{(i)}), \; i= 1,\ldots,N \}
\end{equation*}
of our DG space $\mathcal{V}$,
where 
$
V^{i}_{h}(\Omega^{(i)}) = \text{span}\{R_{\textbf{k}}^{(i)} \}
$
denotes the space of NURBS functions on each single-patch $ \Omega^{(i)}, \; i= 1,\ldots,N$, 
and the NURBS basis functions
$
  R_{\textbf{k}}^{(i)} = \widehat{R}^{(i)}_{\textbf{k}} \circ G^{(i)^{-1}} 
$
are given by the push-forward of the NURBS functions $\widehat{R}^{(i)}_{\textbf{k}}$ 
to their corresponding physical sub-domains $ \Omega^{(i)}$ on the surface $\Omega$. 
Finally, the DG scheme for our model problem \eqref{sec2:VariationalFormulation} 
reads as follows:
find $u_{h} \in \mathcal{V}_{h}$ such that 
\begin{equation}
\label{sec3:DiscreteDGVariationalFormulation}
 a_{DG}(u_{h},v_{h}) = \langle F_{DG},v_{h} \rangle , \quad \forall v_{h} \in \mathcal{V}_{h}.
\end{equation}

For simplicity of our analysis, we assume matching meshes in the IGA sense, 
where the discretization parameter $h_i$ characterizes the mesh-size in the patch $\Omega^{(i)}$
whereas $p$ always denotes the underlying polynomial degree of the NURBS. 
Using special trace and inverse inequalities in the NURBS spaces $\mathcal{V}_{h}$ 
and Young's inequality, for sufficiently large DG penalty parameter $\delta$,
we can easily establish $\mathcal{V}_{h}$ coercivity and boundedness 
of the DG bilinear form with respect to the DG energy norm 
\begin{equation}
\label{sec3:DG-Norm}
\|v \|^{2}_{DG} = 
	\sum_{i=1}^{N} \alpha_{i} \|\nabla_{\Omega} v_{i} \|_{L^{2}(\Omega^{i})}^{2}   
        + \sum_{ \gamma \in \mathcal{E}_{I} \cup \mathcal{E}_{D}}
	 \alpha_{\gamma} \frac{\delta}{h_{\gamma}} \| [v] \|_{L^{2}(\gamma)}^{2}, 
\end{equation}
yielding existence and uniqueness of the DG solution $u_{h} \in \mathcal{V}_{h}$
of (\ref{sec3:DiscreteDGVariationalFormulation})
that can be determined  by the solution of a linear system of algebraic
equations.

\section{Discretization Error Estimates}
\label{sec4:DiscretizationErrorEstimates}
\begin{theorem}\label{sec4:DG-Norm-ErrorEstimate}
Let $u \in V_g \cap H^{1+s}(\mathcal{T}_{H})$ with some $s > 1/2$ be the solution of 
(\ref{sec2:VariationalFormulation}),
$u_{h} \in \mathcal{V}_{h}$ be the solution of (\ref{sec3:DiscreteDGVariationalFormulation}), 
and the penalty parameter $\delta$  be chosen large enough . 
Then there  exists a positive constant $c$
that is independent of $u$, the discretization parameters
and the jumps in the diffusion coefficients
such that the DG-norm error estimate 
\begin{equation}
\label{sec4:DG-normError Estimate}
      \|u-u_{h} \|_{DG}^{2} \leq c 
	\left(\sum_{i=1}^{N} \alpha_{i} h_{i}^{2t}\|u\|^{2}_{H^{1+t}(\Omega^{(i)})}\right)^{1/2},
\end{equation}
holds with $t:= \min\{s,p\} $.
\end{theorem}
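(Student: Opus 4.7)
My plan is to use a standard Strang/Céa-type argument for non-conforming methods. The three ingredients I rely on are: Galerkin orthogonality, which follows by subtracting the discrete scheme (\ref{sec3:DiscreteDGVariationalFormulation}) from the consistency identity of Theorem~\ref{Thm:Sec3:Consistency}, giving $a_{DG}(u-u_h,v_h)=0$ for every $v_h \in \mathcal{V}_h$; $\mathcal{V}_h$-coercivity of $a_{DG}$ in $\|\cdot\|_{DG}$ for $\delta$ sufficiently large; and a boundedness estimate
\begin{equation*}
|a_{DG}(w,v_h)| \leq c\, \|w\|_{DG,*}\,\|v_h\|_{DG} \quad \forall v_h \in \mathcal{V}_h,\; w \in \mathcal{V},
\end{equation*}
in an augmented norm $\|\cdot\|_{DG,*}$ that additionally controls, on each patch, the normal-flux edge traces $\alpha_i h_\gamma\|\nabla_\Omega w \cdot \mathbf{n}\|_{L^2(\gamma)}^2$.

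The key technical preparation is a patchwise NURBS quasi-interpolant $\Pi_h u \in \mathcal{V}_h$. On each $\Omega^{(i)}$, I build it by pulling $u$ back to the parameter domain $\widehat{\Omega}$ through $G^{(i)}$, applying a NURBS quasi-interpolant of Bazilevs--da Veiga--Cottrell--Hughes--Sangalli type, and pushing the result forward. Using the regularity of $G^{(i)}$ to relate $\nabla_\Omega$ to the parametric gradient via the first fundamental form, this yields the local approximation estimate
\begin{equation*}
\|u-\Pi_h u\|_{L^2(\Omega^{(i)})} + h_i\|\nabla_\Omega(u-\Pi_h u)\|_{L^2(\Omega^{(i)})} \leq c\, h_i^{1+t}\|u\|_{H^{1+t}(\Omega^{(i)})},
\end{equation*}
with $t=\min\{s,p\}$, together with the analogous edge bounds obtained from the NURBS trace inequality $\|v\|_{L^2(\gamma)}^2 \leq c(h_i^{-1}\|v\|_{L^2(\Omega^{(i)})}^2 + h_i\|\nabla_\Omega v\|_{L^2(\Omega^{(i)})}^2)$ and its variant for the normal flux.

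With these tools I split the error as $u-u_h=\eta+\theta_h$, where $\eta:=u-\Pi_h u$ and $\theta_h:=\Pi_h u - u_h \in \mathcal{V}_h$. The interpolation part $\|\eta\|_{DG,*}$ is bounded term by term from the approximation and trace estimates, where I exploit $[u]=0$ on $\mathcal{E}_I$ and $u|_{\Gamma_D}=g_D$ to rewrite $[\eta]$ as a one-sided trace of $\eta$, and weight each edge contribution by $\alpha_\gamma$ (chosen as a harmonic-type average) so that the constants remain independent of the jumps in $\alpha$. The discrete part is treated by combining the three ingredients from the first paragraph: $\|\theta_h\|_{DG}^2 \leq c\, a_{DG}(\theta_h,\theta_h) = -c\, a_{DG}(\eta,\theta_h) \leq c\, \|\eta\|_{DG,*}\|\theta_h\|_{DG}$, so that $\|\theta_h\|_{DG} \leq c\,\|\eta\|_{DG,*}$. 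The triangle inequality $\|u-u_h\|_{DG} \leq \|\eta\|_{DG} + \|\theta_h\|_{DG}$ and summation over patches then deliver the claimed bound.

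The main obstacle I expect is establishing the approximation, trace, and inverse inequalities for the multi-patch NURBS spaces on a curved surface with constants independent of $h_i$ and of the jumps in $\alpha$. This requires the first fundamental form of each $G^{(i)}$ to be uniformly regular, so that surface-intrinsic quantities are equivalent to parametric ones on $\widehat{\Omega}$, and a careful choice of the edge weights $\alpha_\gamma$ so that the consistency and penalty terms balance across interfaces with large diffusion contrast. Once these ingredients are in place, the remainder of the argument is a routine algebraic combination of the estimates above.
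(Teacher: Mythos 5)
Your proposal follows essentially the same route as the paper's (sketched) proof: a patchwise NURBS quasi-interpolant combined with the triangle inequality, Galerkin orthogonality, $\mathcal{V}_h$-coercivity, Young's inequality, and scaled trace estimates to control the interface/flux terms, yielding the bound for both the interpolation and the discrete parts of the error. The one refinement to note is that for $1/2<s<1$ the normal-flux edge bound cannot come from the integer-order trace inequality you quote but needs the fractional version $\|v\|_{L^{2}(e)} \leq C h_{E}^{-1/2}\left(\|v\|_{L^{2}(E)}+h_{E}^{1/2+\epsilon}|v|_{H^{1/2+\epsilon}(E)}\right)$, which is precisely the inequality (\ref{sec4:EpsilonTraceInequality}) the paper invokes.
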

\begin{proof}
Let us give a sketch of the proof.
By the triangle inequality, we have 
\begin{equation}\label{sec4:DGTriangleInequality}
  \|u-u_{h} \|_{DG} \leq \|u- \Pi_{h} u\|_{DG} + \|\Pi_{h} u - u_{h}\|_{DG}
\end{equation}
with some quasi-interpolation operator $\Pi_{h}: \mathcal{V} \mapsto \mathcal{V}_{h}$
such that the first term can be estimated with optimal order, i.e. by the term 
on the right-hand side of (\ref{sec4:DG-normError Estimate})
with some other constant $c$. This is possible due to the 
approximation results known for NURBS, 
see, e.g., \cite{BazilevsBeiraoCottrellHughesSangalli:2006a} 
and \cite{CottrellHughesBazilevs:2009a}.
Now it remains to estimate the second term in the same way.
Using the Galerkin orthogonality $a_{DG}(u-u_h,v_h)=0$ for all $v_h \in \mathcal{V}_{h}$,
the $\mathcal{V}_{h}$ coercitivity of the bilinear form $a_{DG}(\cdot,\cdot)$, 
the scaled trace inequality
\begin{equation}
\label{sec4:EpsilonTraceInequality}
  \|v \|_{L^{2}(e)} \leq 
	C h^{-1/2}_{E} \left( \|v \|_{L^{2}(E)} 
	      + h^{1/2+\epsilon}_{E} |v|_{H^{1/2+\epsilon}(E)} \right),
\end{equation}
that holds for all $v \in H^{1/2+\epsilon}(E)$, for all IGA mesh elements $E$, 
for all edges $e \subset \partial E$, and for  $\epsilon > 0$,
where $ h_{E}$ denotes the mesh-size of $E$  or the length of $e$, 
Young's inequality, and again the approximation properties 
of the quasi-interpolation operator $\Pi_{h}$, we can estimate the second 
term by the same term
$ c \left(\sum_{i=1}^{N} \alpha_{i} h_{i}^{2t}\|u\|^{2}_{H^{1+t}(\Omega^{(i)})}\right)^{1/2}$
with some (other) constant $c$.
This completes the proof of the theorem.
\end{proof}

Using duality arguments, we can also derive $L_2$-norm 
error estimates that depend on the elliptic regularity.
Under the assumption of full elliptic regularity, we get 
$\|u-u_{h} \|_{L_2(\Omega)} \le c\, h^{p+1} \|u\|_{H^{p+1}(\Omega)}$
that is nicely confirmed by our numerical experiments
presented in the next section for $p=1,2,3,4$.

\section{Numerical Results}
\label{sec5:NumericalResults}
The DG IGA method presented in this paper 
as well as its continuous 
Galerkin
counterpart have been implemented
in the  object oriented C++ IGA library ''Geometry + Simulation Modules''
(G+SMO)~\footnote{G+SMO : https://ricamsvn.ricam.oeaw.ac.at/trac/gismo}.  
We present some first numerical results for testing 
the numerical behavior of
the discretization error with respect to the mesh parameter $h$
and the polynomial degree $p$ Concerning the choice of the penalty parameter, 
we used $\delta = 2(p+2)(p+1).$

\par As a first example, we consider a non-homogeneous Dirichlet problem for 
the Poisson equation 
in the 2d computational domain 
$\Omega \subset \mathbb{R}^2$ 
called Yeti's footprint, see also \cite{KleissPechsteinJuttlerTomar:2012a},
where the right-hand side $f$ and the Dirichlet data $g_D$ are chosen such 
that $u(x_1,x_2) =  \sin(\pi x_1)\sin(\pi x_2)$ is the solution of the boundary value problem.
The computational domain (left) and the solution (right) can be seen in 
Fig.~\ref{sec5:fig1:YetiFoot}.
The Yeti footprint consists of 21 patches with varying open knot vectors $\Xi$
describing the 
NURBS
discretization in a short and precise way,
see, e.g., \cite{CottrellHughesBazilevs:2009a}
for a detailed definition. 
The knot vector for patches 1 to 16 and 21 is given as
$\Xi = (0,\ldots,0,0.5,1,\ldots,1)$ in both directions whereas
the knot vectors for the patches 17 to 20 are given as 
$\Xi_1 = ( 0,\ldots,0,0.5,1,\ldots,1)$ and 
$\Xi_2 = ( 0,\ldots,0,0.25,0.5,0.75,1,\ldots,1).$
\begin{figure}
  \centering
  \includegraphics[width  = 0.35\textwidth]{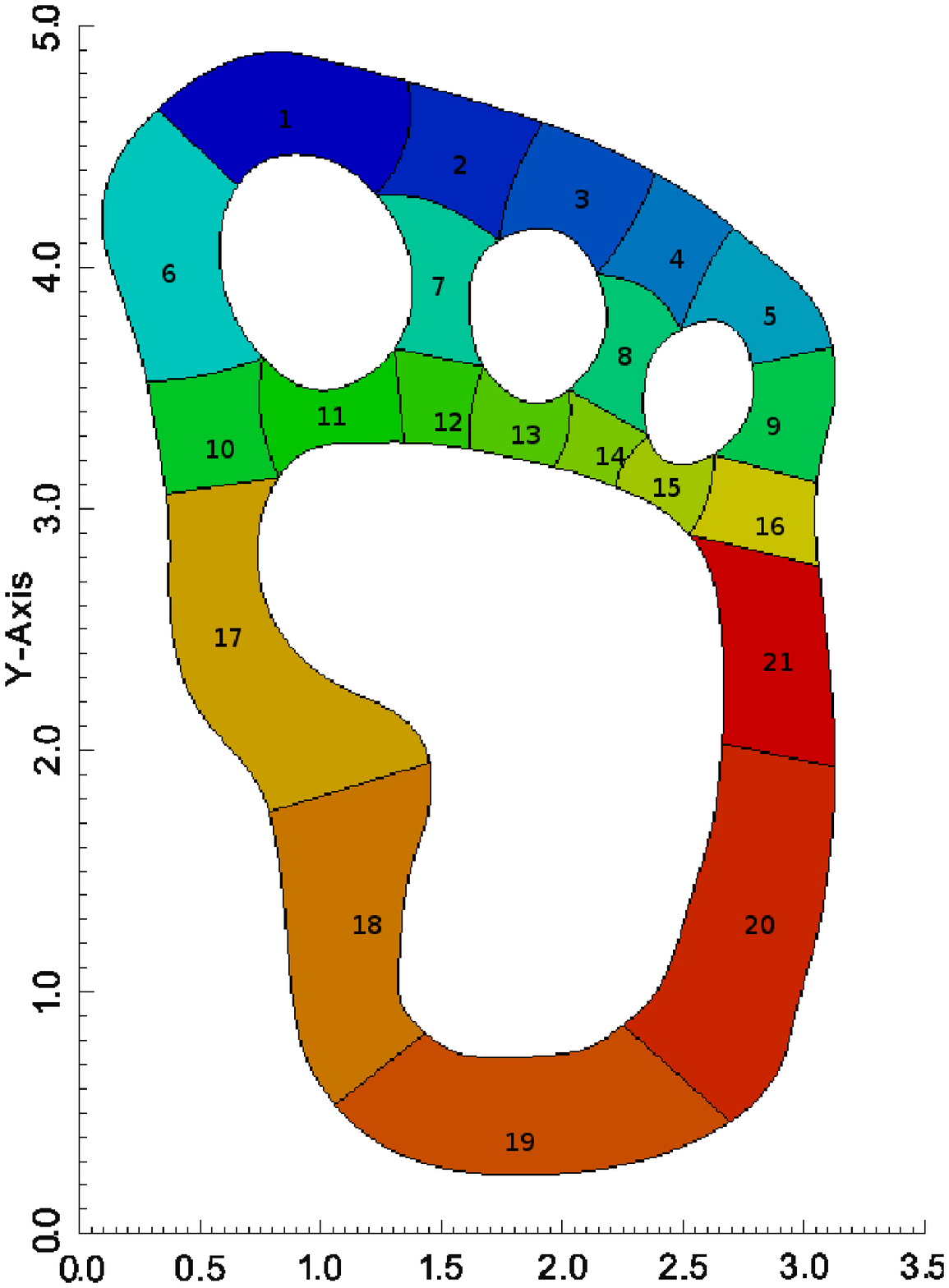} \hfil
  \includegraphics[width = 0.35\textwidth]{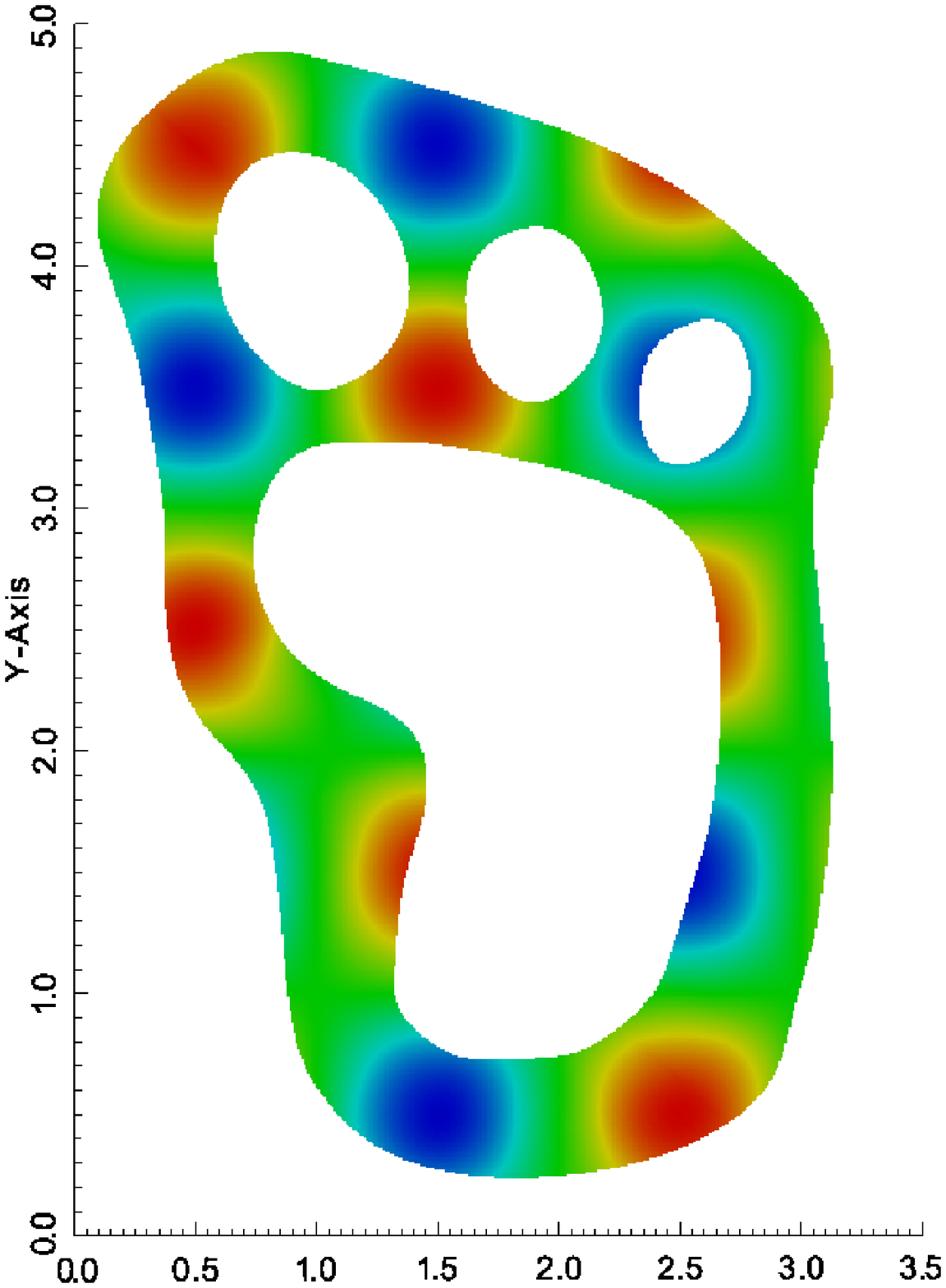}
  \caption{Yeti foot: geometry (left) and DG-IGA solution (right)}
  \label{sec5:fig1:YetiFoot}  
\end{figure}
\begin{figure}
  \centering
  \includegraphics[width  = 0.7\textwidth]{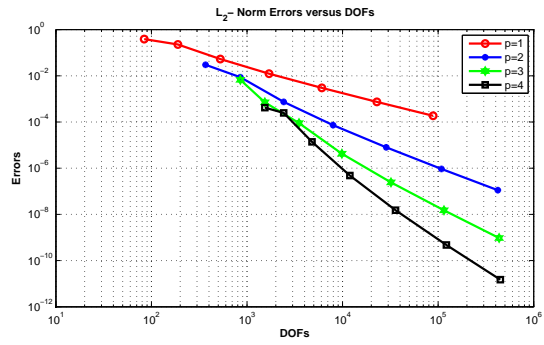} 
  \caption{Yeti foot: $L_2-$ Norm Errors  with polynomial degree $p$}
  \label{sec5:fig2:YetiFootL2Error}  
\end{figure}
\begin{figure}
  \centering
  \includegraphics[width  = 0.7\textwidth]{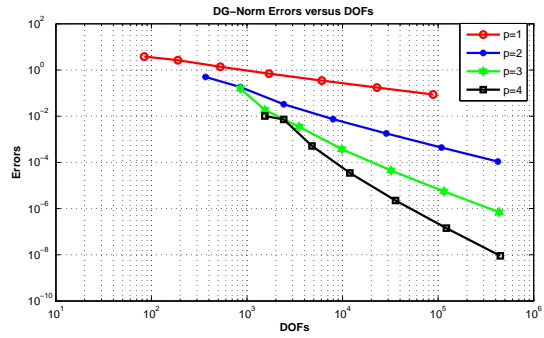} 
  \caption{Yeti foot: DG Norm  Errors with polynomial degree $p$}
  \label{sec5:fig3:YetiFootDGError}     
\end{figure}
In Fig.~\ref{sec5:fig2:YetiFootL2Error} and \ref{sec5:fig3:YetiFootDGError}, 
the errors in the $L_2$-norm and in the DG energy norm (\ref{sec3:DG-Norm})
are plotted against the degree of freedom (DOFs) with polynomial degrees from 1 to 4.  
It can be observed that we have convergence rates of $\mathcal{O}(h^{p+1})$ 
and $\mathcal{O}(h^{p})$ respectively. 
This corresponds to our theory in Section~\ref{sec4:DiscretizationErrorEstimates}.

In the second example, we apply the DG-IGA to the same 
Laplace-Beltrami problem on an open surface 
as described in \cite{DedeQuarteroni:2012a}, section~5.1, 
where $\Omega$ is a quarter cylinder represented by four patches in our computations, 
see Fig.~\ref{sec5:fig4:MultiPatchCylinder} (left).
The $L_{2}-$norm errors plotted on the right side of
Fig.~\ref{sec5:fig4:MultiPatchCylinder}
exhibit the same numerical behavior as in the plane 
case of the Yeti foot.
The same is true for the DG-norm.
\begin{figure}[bth!]
  \centering
  \includegraphics[width  = 0.38\textwidth]{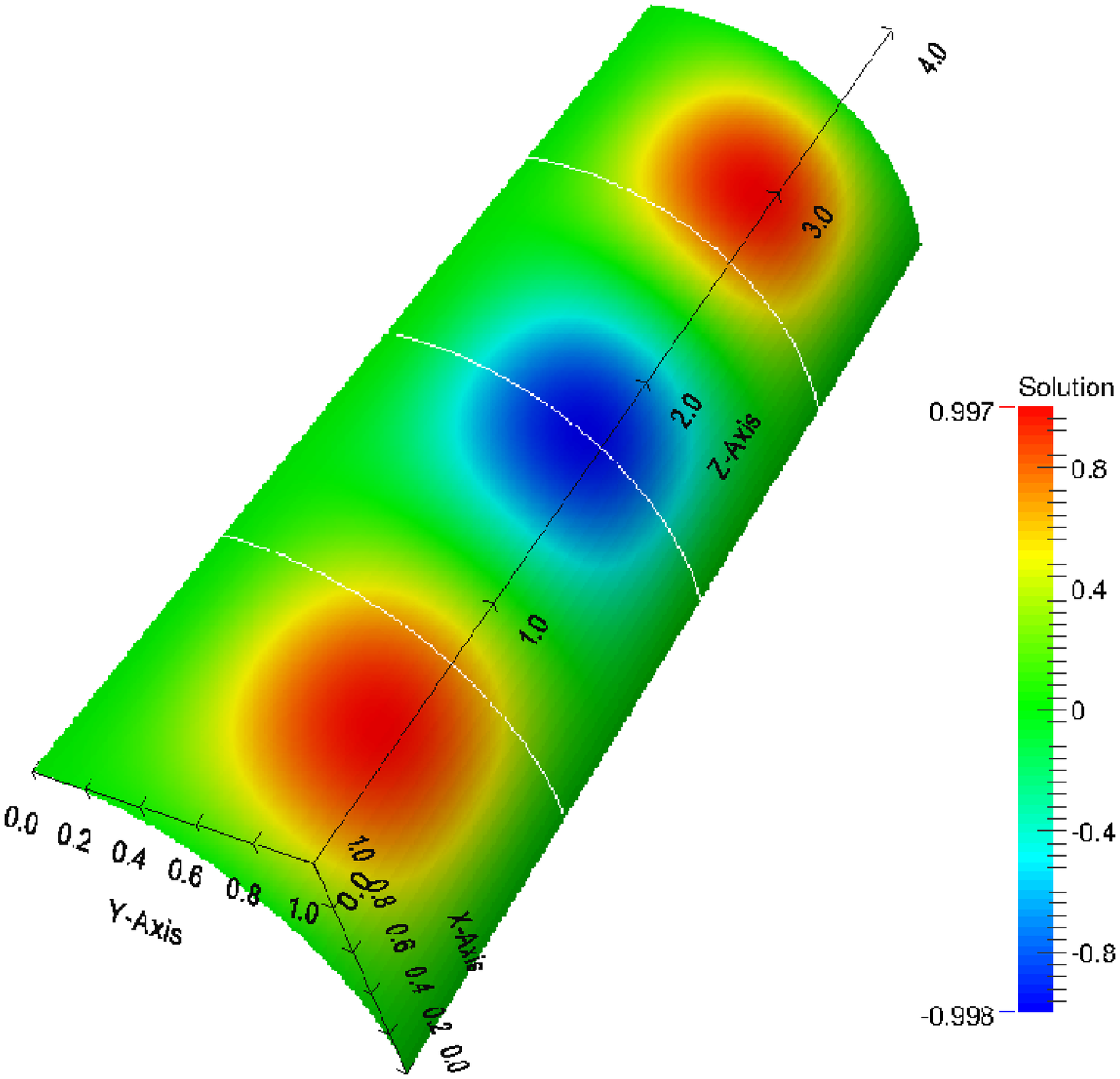} 
  \includegraphics[width  = 0.60\textwidth]{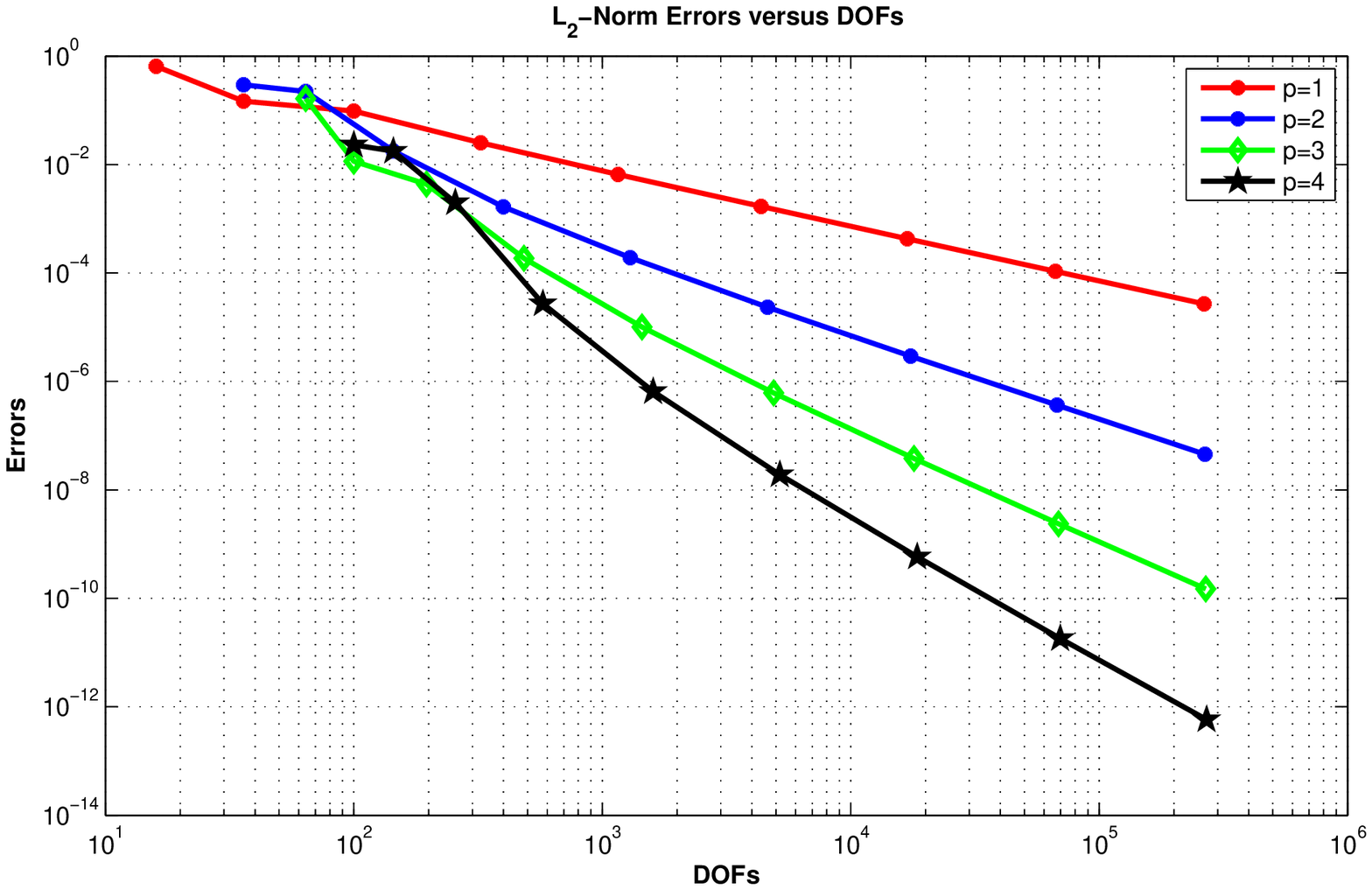} 
  \caption{Quarter cylinder: geometry with the solution  (left) 
      and $L_{2}$ norm errors (right).}
\label{sec5:fig4:MultiPatchCylinder}  
\end{figure}

\section{Conclusions}
\label{sec6:Conclusion}
We have developed and analyzed a new method for the numerical approximation 
of diffusion problems on open and closed surfaces 
by combining the discontinuous Galerkin technique with isogeometric analysis.
We refer to our approach as the Discontinuous Galerkin Isogeometric Analysis (DG-IGA).
In our DG approach we allow discontinuities only across the 
boundaries of the patches, 
into which the computational domain is decomposed, 
and enforce the interface conditions in the DG framework. 
For simplicity of presentation, we assume that 
the meshes are matching across the patches, and
the solution $u$ 
is at least patch-wise in $H^{1+s}$, i.e. $u \in H^{1+s}(\mathcal{T}_{H})$,
with some $s > 1/2$. 
The cases of non-matching meshes and low-regularity solution,  
that are technically more involved and that were investigated,
e.g., by  \cite{Dryja:2003a} and \cite{DiPietroErn:2012a},
will be considered in a forthcoming paper.
The parallel solution of the DG-IGA equations can efficiently be performed by 
Domain Decomposition (DD) solvers like the IETI technique
proposed by \cite{KleissPechsteinJuttlerTomar:2012a},
see also \cite{ApostolatosSchmidtWuencherBletzinger:2013a}
for other DD solvers. The construction and analysis of 
efficient solution strategies is currently a hot research topic
since, beside efficient generation techniques, 
the solvers are the efficiency bottleneck
in large-scale IGA computations.

\section*{Acknowledgement}
The authors gratefully acknowledge the financial support of the
research project NFN S117-03 by the Austrian Science Fund (FWF). 
Furthermore, the authors want to thank their colleagues
Angelos Mantzaflaris, Satyendra Tomar, Ioannis Toulopoulos and Walter Zulehner 
for fruitful and enlighting discussions as well as for their 
help in the implementation in GISMO.


\bibliographystyle{plainnat}
\bibliography{DGIGASurface_MooreLanger} 

\begin{thebibliography}{12}
\providecommand{\natexlab}[1]{#1}
\providecommand{\url}[1]{\texttt{#1}}
\expandafter\ifx\csname urlstyle\endcsname\relax
  \providecommand{\doi}[1]{doi: #1}\else
  \providecommand{\doi}{doi: \begingroup \urlstyle{rm}\Url}\fi

\bibitem[Apostolatos et~al.(2013)Apostolatos, Schmidt, W\"uchner, and
  Bletzinger]{ApostolatosSchmidtWuencherBletzinger:2013a}
A.~Apostolatos, R.~Schmidt, R.~W\"uchner, and K.-U. Bletzinger.
\newblock A {N}itsche-type formulation and comparison of the most common domain
  decomposition methods in isogeometric analysis.
\newblock \emph{Int. J. Numer. Methods Eng.}, 2013.
\newblock URL \url{http://dx.doi.org/10.1002/nme.4568}.
\newblock published online first.

\bibitem[Bazilevs et~al.(2006)Bazilevs, Beir\~ao~da Veiga, Cottrell, Hughes,
  and Sangalli]{BazilevsBeiraoCottrellHughesSangalli:2006a}
Y.~Bazilevs, L.~Beir\~ao~da Veiga, J.A. Cottrell, T.J.R. Hughes, and
  G.~Sangalli.
\newblock Isogeometric analysis: {A}pproximation, stability and error estimates
  for $h$-refined meshes.
\newblock \emph{Comput. Methods Appl. Mech. Engrg.}, 194:\penalty0 4135--4195,
  2006.

\bibitem[Brunero(2012)]{Brunero:2012a}
F.~Brunero.
\newblock Discontinuous {G}alerkin methods for isogeometric analysis.
\newblock Master's thesis, Universit\`{a} degli Studi di Milano, 2012.

\bibitem[Cottrell et~al.(2009)Cottrell, Hughes, and
  Bazilevs]{CottrellHughesBazilevs:2009a}
J.A. Cottrell, T.J.R. Hughes, and Y.~Bazilevs.
\newblock \emph{Isogeometric Analysis: Toward Integration of CAD and FEA}.
\newblock John Wiley \& Sons, Chichester, 2009.

\bibitem[Dede and Quarteroni(2012)]{DedeQuarteroni:2012a}
L.~Dede and A.~Quarteroni.
\newblock Isogeometric analyis for second order partial differential equations
  on surfaces.
\newblock MATHICSE Report 36.2012, Politecnico di Milano, 2012.

\bibitem[Dedner et~al.(2013)Dedner, Madhavan, and
  Stinner]{DednerMadhavanStinner:2013a}
A.~Dedner, P.~Madhavan, and B.~Stinner.
\newblock Analysis of the discontinuous {G}alerkin method for elliptic problems
  on surfaces.
\newblock \emph{IMA J. Numer. Anal}, 33\penalty0 (3):\penalty0 952--973, 2013.

\bibitem[Dryja(2003)]{Dryja:2003a}
M.~Dryja.
\newblock On discontinuous {G}alerkin methods for elliptic problems with
  discontinuous coeffcients.
\newblock \emph{Comput. Methods Appl. Math.}, 3:\penalty0 76--85, 2003.

\bibitem[Dziuk and Elliot(2013)]{DziukElliot:2013a}
G.~Dziuk and C.M. Elliot.
\newblock Finite element methods for surface pdes.
\newblock \emph{Acta Numerica}, 22:\penalty0 289--396, 2013.

\bibitem[Hughes et~al.(2005)Hughes, Cottrell, and
  Bazilevs]{HughesCottrellBazilevs:2005a}
T.J.R. Hughes, J.A. Cottrell, and Y.~Bazilevs.
\newblock Isogeometric analysis: {CAD}, finite elements, {NURBS}, exact
  geometry and mesh refinement.
\newblock \emph{Comput. Methods Appl. Mech. Engrg.}, 194:\penalty0 4135--4195,
  2005.

\bibitem[Kleiss et~al.(2012)Kleiss, Pechstein, J\"uttler, and
  Tomar]{KleissPechsteinJuttlerTomar:2012a}
S.~K. Kleiss, C.~Pechstein, B.~J\"uttler, and S.~Tomar.
\newblock {IETI -- Isogeometric Tearing and Interconnecting}.
\newblock \emph{Comput. Methods Appl. Mech. Engrg.}, 247 - 248\penalty0
  (0):\penalty0 201--215, 2012.

\bibitem[Pietro and Ern(2012)]{DiPietroErn:2012a}
D.A.~Di Pietro and A.~Ern.
\newblock \emph{Mathematical Aspects of Discontinous Galerkin Methods},
  volume~69 of \emph{Math\'{e}matiques et Applications}.
\newblock Springer-Verlag, Heidelberg, Dordrecht, London, New York, 2012.

\bibitem[Rivi\`ere(2008)]{Riviere:2008a}
B.~Rivi\`ere.
\newblock \emph{Discontinuous Galerkin Methods for Solving Elliptic and
  Parabolic Equations: Theory and Implementation}.
\newblock SIAM, Philadelphia, 2008.

\end{thebibliography}

\end{document}